	\DeclareMathOperator\supp{supp}
	\newcommand{\pp}{{\mathbb P}}
	\newcommand{\nn}{{\mathbb N}}
	\newcommand{\rr}{{\mathbb R}}
	\newcommand{\cala}{{\mathcal A}}
	\newcommand{\calg}{{\mathcal G}}
	\newcommand{\calb}{{\mathcal B}}
	\newcommand{\calp}{{\mathcal P}}
	\newcommand{\beq}{\begin{eqnarray*}}
		\newcommand{\feq}{\end{eqnarray*}}
	\newcommand{\beqn}{\begin{eqnarray}}
	\newcommand{\feqn}{\end{eqnarray}}
	\newtheorem{theorem}{Theorem}
	\newtheorem*{conj*}{Conjecture}
	\makeatletter \@addtoreset{theorem}{section}\makeatother
	\newtheorem{lemma}[theorem]{Lemma}
	\newtheorem*{theorema*}{Theorem~A}
	\newtheorem*{theoremb*}{Theorem~B}
	\newtheorem*{theoremc*}{Theorem~C}
	\newtheorem*{theoremd*}{Theorem~D}
	\newtheorem*{theoreme*}{Theorem~E}
	\newtheorem*{theoremf*}{Theorem~F}
	\newtheorem*{cld*}{Condition $\mbox{LD}_d$}
	\newtheorem*{theorem*}{Theorem}
	\def\BState{\State\hskip-\ALG@thistlm}
	\newlength\myindent
	\DeclareMathOperator{\occ}{\mbox{occ}}
	\title{Assorted inequalities for pattern occurrences}
	\author{Reza~Rastegar\thanks{Center of Excellence for Data Science and Modeling, Occidental Petroleum Corporation, Houston, TX 77046 and Departments of Mathematics and Engineering, University of Tulsa, OK 74104, USA - Adjunct Professor; e-mail:   reza.j.rastegar@gmail.com} }
\begin{document}
	\maketitle
	\begin{abstract}
In this note, we present several inequalities in the context of pattern containment, utilizing elementary applications of the Fortuin-Kasteleyn-Ginibre (FKG) inequality and Shearer's lemma.
	\end{abstract}
	{\em MSC2010: } Primary~05A05,  05D40.\\
	\noindent{\em Keywords}: pattern avoidance and occurrence, probabilistic method
		
	\section{Introduction}

    In this manuscript, we establish several inequalities concerning pattern containment in permutations that has been the subject of intense research over the last three decades, and many extensions and related results can be found in \cite{Bbook, Kbook}. While some of our results have a broader scope, we limit our discussion to permutations for ease of presentation. \par
    
    To present our findings,  a few notations and definitions will be given in the next several paragraphs. We use the notation $\nn:=\{1,2,3,\ldots \}$ and $\nn_0$ to denote the set of natural numbers and the set of non-negative integers, respectively; that is, $\nn_0=\nn \cup \{0\}.$ We also define $\rr_+$ as the set of all non-negative real numbers. Given a set $A,$ we denote its cardinality as $\#A.$ For $n\in \nn$, we set $[n]:=\{1,\cdots, n\}$, and for $d\leq n,$ we define $[n]_d$ to be the set of all $d$-subsets of $[n]$. Moreover, we define $S_n$ as the set of all permutations of length $n$. We will interchangeably interpret any permutation $\pi$ as either a sequence or a vector. In the context of the latter, we use $\pi(A)$ to denote the sequence $(\pi_i)_{i\in A},$ where $A\subset[n]$.

A \emph{pattern} of length $d\in \nn$ is any distinguished permutation chosen from $S_d$. Recall that any word of length $d$ with $d$ distinct letters can be naturally reduced to a permutation in $S_d$, preserving the relative order of the values. For instance, the word $284$ reduces to the permutation $132$. Given an arbitrary permutation $\pi\in S_n$ with $n\geq d$, an occurrence of the pattern $v\in S_d$ in $\pi$ is a sequence of $d$ indices $1\leq j_1<j_2<\dots<j_d\leq n $ such that the \emph{subsequence} $\pi_{j_1}\cdots \pi_{j_d}$ is \emph{order-isomorphic} to the pattern $v$. In other words, $\pi({j_1,\cdots, j_d})$ can be naturally reduced to $v$, as defined by
\beq
\pi_{j_p}<\pi_{j_q}\Longleftrightarrow v_p<v_q\qquad \forall,1\leq p,q\leq d.
\feq
	
For any permutation $\pi\in S_n$ and any pattern $v\in S_d$ with $d\leq n$, we define $\calb_\pi(v)$ as the subset of $[n]_d$ at which $v$ occurs in $\pi$, that is, $\pi(B)$ is order-isomorphic to $v$ if and only if $B\in \calb_\pi(v)\subset [n]_d$. We use $\occ_v(\pi)$ to denote the number of occurrences of $v$ in $\pi$, that is, $\occ_v(\pi) := \# \calb_\pi(v)$. For any $r\in \nn_0$, we denote by $F_r^v(S_n)$ the set of permutations in $S_n$ containing $v$ exactly $r$ times, i.e.,
\beq
F_r^v(S_n)=\{\pi \in S_n:\ \occ_v(\pi)=r\}.
\feq
We define $f_r^v(S_n)$ to be the cardinality of $F_r^v(S_n)$.
For instance, if $v$ is the inversion $21$ and $\pi=12435$, then $\calb_\pi(v)=\{34\}$, $\occ_v(\pi)=1$, and $\pi\in F_1^{21}(S_5)$. 
	
For any permutation $\pi\in S_n$, we denote by $C_d(\pi)$ the set of all patterns $v\in S_d$ that occurs in the permutation $\pi$. For instance:
	\beq
	C_3(1324) = \{132, 123, 213\} \quad \mbox{and}\quad C_3(1234)=\{ 123\}.
	\feq
	In addition, $C_1(\pi)=\{1\}$ for any permutation $\pi.$ Set $c_d(\pi):=\#C_d(\pi)$. 
 
    For our first result, we seek to express $c_d(\pi)$ and $\occ_\pi(v)$ in terms of simpler structures of shorter length for a fixed permutation $\pi$. It states
    
    \begin{theorem} \label{thm1}
    Let $\pi\in S_n$ be a fixed permutation, then
    \begin{itemize}
    \item[(a)] for any $1\leq \ell < d<n$, 
    \beq
    c_d(\pi) \leq \left( c_\ell(\pi) \binom{d}{\ell} \right)^{\frac{d}{\ell}}.
    \feq
    \item[(b)] for any $v\in S_d$ and $1<\ell<d$
    \beq
    \occ_\pi(v) &\leq&  \prod_{w\in C_\ell(v)} \occ_\pi(w)^{\frac{\occ_v(w)}{\binom{d-1}{l-1}}}.
    \feq
    \end{itemize}
    \end{theorem}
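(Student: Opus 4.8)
The plan is to derive both inequalities from Shearer's lemma in its projection (set-counting) form: if $\mathcal{A}$ is a finite family of sequences indexed by $[m]$ and $\mathcal{F}$ is a collection of subsets of $[m]$ covering each coordinate at least $k$ times, then $|\mathcal{A}|^{k}\le \prod_{F\in\mathcal{F}}|\mathcal{A}_{F}|$, where $\mathcal{A}_{F}$ denotes the projection of $\mathcal{A}$ onto the coordinates in $F$. In both parts I take $m=d$ and let $\mathcal{F}=[d]_\ell$ be the family of all $\ell$-subsets of $[d]$; since a fixed coordinate lies in exactly $\binom{d-1}{\ell-1}$ such subsets, the covering multiplicity is $k=\binom{d-1}{\ell-1}$. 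The only remaining work is to identify the relevant family $\mathcal{A}$ and to bound the size of each projection $\mathcal{A}_L$ for $L\in[d]_\ell$. The recurring arithmetical fact I will use is the identity $\binom{d}{\ell}\big/\binom{d-1}{\ell-1}=d/\ell$, which is exactly what converts the exponent $1/\binom{d-1}{\ell-1}$ produced by Shearer into the clean exponents $d/\ell$ and $\occ_v(w)/\binom{d-1}{\ell-1}$ claimed in the statement.

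For part (a), I regard each pattern $v\in C_d(\pi)$ as the vector $(v_1,\dots,v_d)$ and set $\mathcal{A}=C_d(\pi)$, so that $|\mathcal{A}|=c_d(\pi)$. For a fixed $L\in[d]_\ell$, the projection $\mathcal{A}_L$ consists of the subwords $v(L)=(v_i)_{i\in L}$. The key observation is that any such subword reduces to a length-$\ell$ pattern $w$ that occurs in $v$, hence, by transitivity of pattern containment, occurs in $\pi$; thus $w\in C_\ell(\pi)$, leaving at most $c_\ell(\pi)$ possible reductions. Since a given reduced pattern together with a choice of which $\ell$ values from $[d]$ appear determines the subword uniquely, each reduction is attained by at most $\binom{d}{\ell}$ subwords, so $|\mathcal{A}_L|\le c_\ell(\pi)\binom{d}{\ell}$ uniformly in $L$. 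Feeding this into Shearer's lemma and using the identity above gives
\[
c_d(\pi)^{\binom{d-1}{\ell-1}}\le\prod_{L\in[d]_\ell}|\mathcal{A}_L|\le\Bigl(c_\ell(\pi)\tbinom{d}{\ell}\Bigr)^{\binom{d}{\ell}},
\]
and extracting the $\binom{d-1}{\ell-1}$-th root yields (a).

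For part (b), I take $\mathcal{A}=\calb_\pi(v)\subseteq[n]_d$, the set of occurrences of $v$ in $\pi$, writing each occurrence as the increasing index vector $(j_1,\dots,j_d)$; thus $|\mathcal{A}|=\occ_\pi(v)$. Fix $L\in[d]_\ell$ and let $w_L\in C_\ell(v)$ be the reduction of $v(L)$. If $(j_1,\dots,j_d)$ is an occurrence of $v$, then its projection onto $L$ is an increasing index vector whose corresponding values in $\pi$ reduce to $w_L$; that is, the projection is an occurrence of $w_L$ in $\pi$, so $|\mathcal{A}_L|\le \occ_\pi(w_L)$. Grouping the $\ell$-subsets by the pattern they induce, where each $w\in C_\ell(v)$ arises from exactly $\#\calb_v(w)=\occ_v(w)$ of them, gives $\prod_{L\in[d]_\ell}|\mathcal{A}_L|\le\prod_{w\in C_\ell(v)}\occ_\pi(w)^{\occ_v(w)}$, and Shearer's lemma followed by the $\binom{d-1}{\ell-1}$-th root delivers (b).

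The conceptual content is modest once the projections are set up correctly, so I expect the main obstacles to be bookkeeping rather than depth. The most delicate point in (a) is the projection count: one must resist identifying $\mathcal{A}_L$ with the set of reduced patterns and instead remember that the projection records the actual values, which is precisely where the factor $\binom{d}{\ell}$ enters and makes the final exponent come out to $d/\ell$. In (b) the analogous care is needed in the grouping step, namely verifying that the map $L\mapsto w_L$ has fibers of size exactly $\occ_v(w)$, together with checking that projecting an occurrence of $v$ genuinely lands in $\calb_\pi(w_L)$, which again rests on the transitivity of pattern containment.
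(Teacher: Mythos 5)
Your proof is correct and follows essentially the same route as the paper: both apply Shearer's lemma to the family $[d]_\ell$ with covering multiplicity $\binom{d-1}{\ell-1}$, bound each projection by $c_\ell(\pi)\binom{d}{\ell}$ in (a) and by $\occ_\pi(w)$ grouped according to the induced pattern $w$ in (b), and finish with the identity $\binom{d}{\ell}\big/\binom{d-1}{\ell-1}=d/\ell$. The only cosmetic differences are that you invoke the combinatorial (projection-counting) form of Shearer's lemma where the paper uses the entropy of a uniform random element together with the boundedness property, and that in (b) you bound each projection directly where the paper takes a superfluous detour through the AM--GM inequality.
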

    
    As a simple trivial example for part (a), when $\pi$ is the identity permutation and $v$ is the pattern $1\cdots d$, we have $C_{d-1}(v)=\{1\cdots (d-1)\}$, and the inequality reduces to
    \beq
    \binom{n}{d} \leq \binom{n}{d-1} ^{d/(d-1)}.
    \feq 
    With regard to part (b), to the best of our knowledge, there is currently no known simple relation between $\occ_\pi(v)$ and $\occ_\pi(w)$ for $w\in C_\ell(v)$. Therefore, Theorem~\ref{thm1}-(b) offers some new insight in this direction. Figure \ref{fig:ineq1_pic} displays a simulation of the left-hand side and the right-hand side of the inequality. 
    
    \begin{figure}[!h]
	\centering
	\begin{minipage}{.5\textwidth}
		\centering
		\includegraphics[scale=.4]{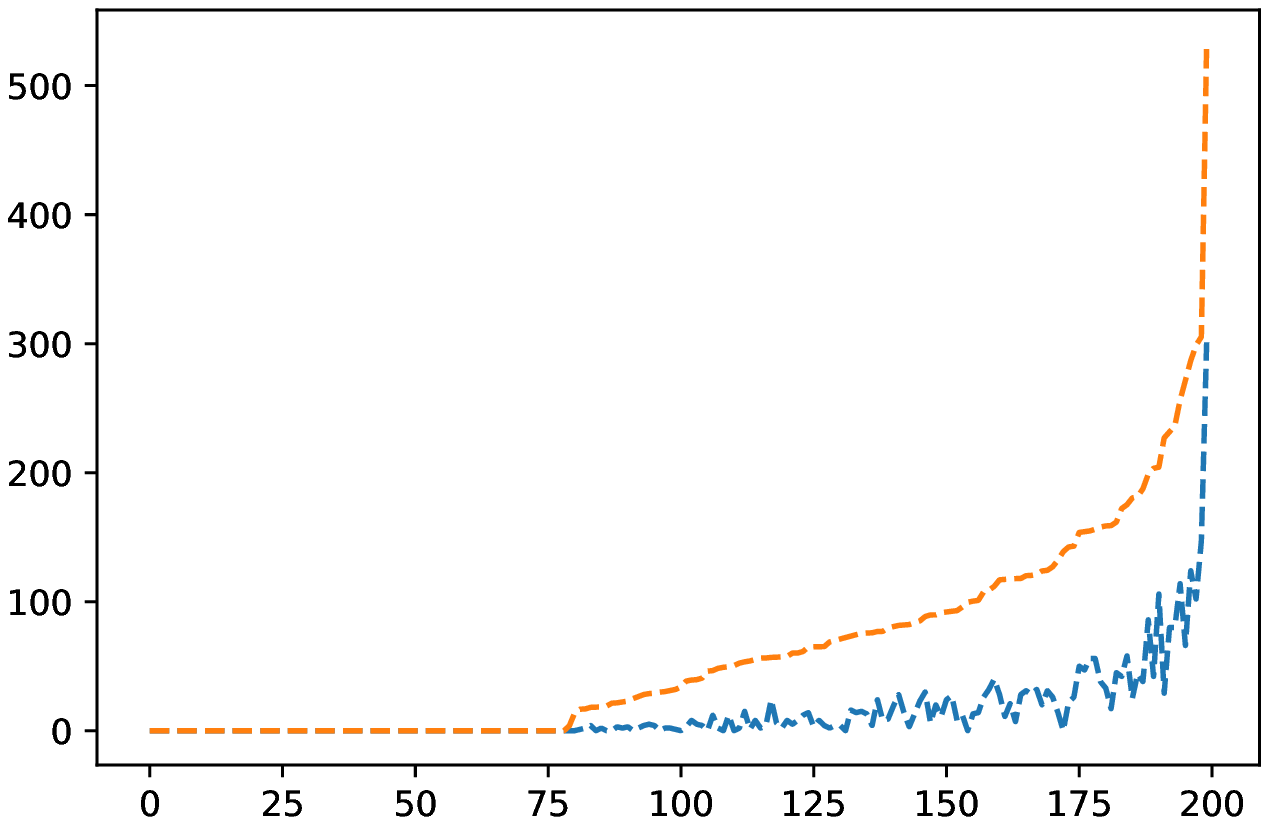}
	\end{minipage}%
	\begin{minipage}{.5\textwidth}
		\centering
		\includegraphics[scale=0.4]{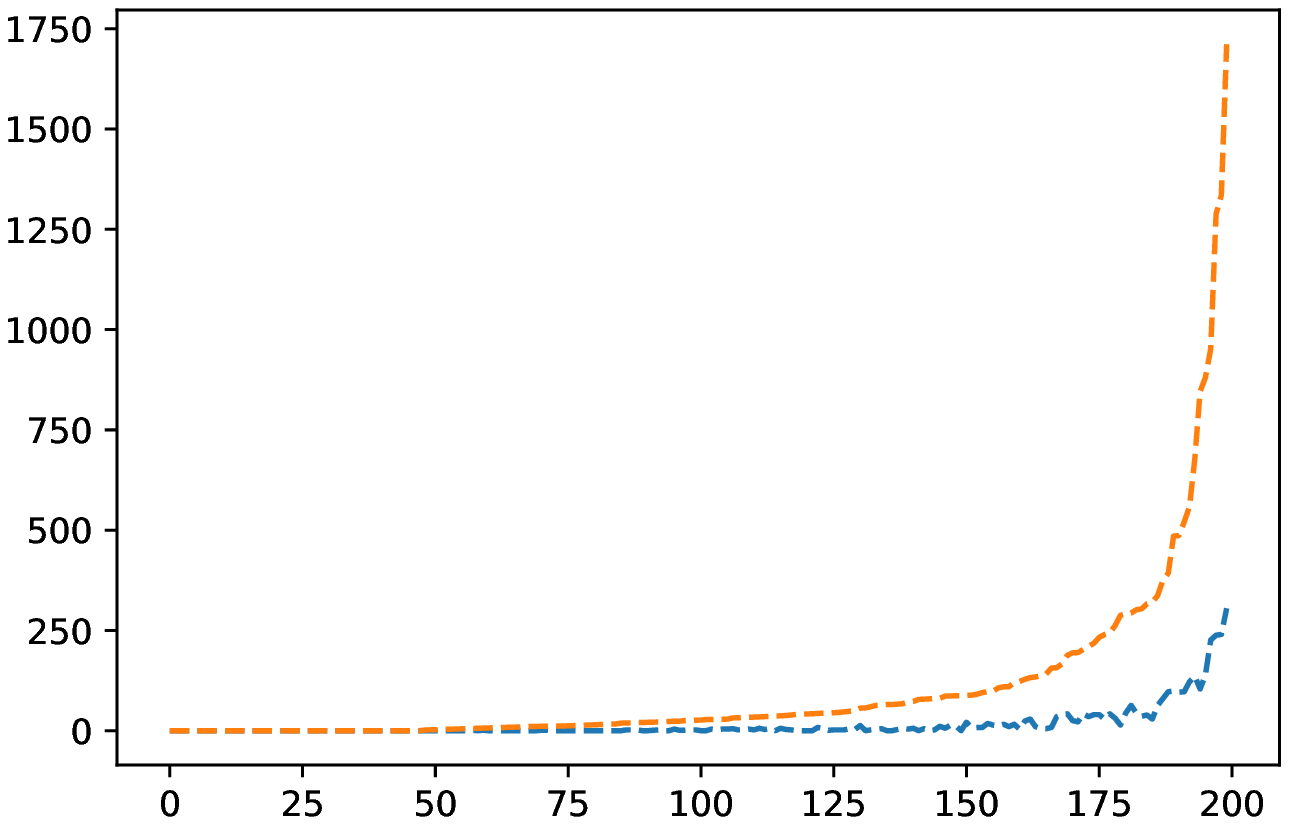}
	\end{minipage}
	\caption{Simulation results comparing the exact value of $\occ_\pi(v)$ (blue plot) with the upper bound predicted by Theorem~\ref{thm1}-(b) (red plot) for 200 randomly generated permutations in $S_{20}$. The pattern $v$ is taken to be $5274316$ on the left and $1234765$ on the right.}
	\label{fig:ineq1_pic}
    \end{figure}  

    The proofs of both parts of the theorem are obtained through entropy argument and an application of Shearer's lemma and are presented in the section \ref{sec1}. \par
Our second result has a different flavor. In order to state it, we first provide an additional definition. Let $B_1,\cdots,B_r$ denote $r$ distinct sets in $[n]d$, and let $F^v_{\{B_1,\cdots,B_r\}}(S_n)$ denote the set of permutations in $S_n$ where order-isomorphic copies of $v$ appear precisely at sets $B_i$. That is, $\pi\in F^v_{\{B_1,\cdots,B_r\}}(S_n)$ if and only if $\calb_\pi(v)=\{B_1,\cdots, B_r\}$. Set $f^v_{\{B_1,\cdots,B_r\}}(S_n):=\#F^v_{\{B_1,\cdots,B_r\}}(S_n)$.

Furthermore, we recall that in the realm of order theory, a function $\mu:L\to\rr_+$ is log-supermodular for any finite distributive lattice $L$ if it satisfies the following inequality:
\beq
\mu(x)\mu(y) \leq \mu(x \wedge y)\mu(x \vee y), \quad x,y\in L,
\feq
where $x \wedge y$ and $x \vee y$ denote the infimum and supremum of $x$ and $y$, respectively, as defined by the order on $L$.Let $\mathcal{P}(n)$ denote the set of all subsets of $[n]$. This set forms a distributive lattice. We define a log-supermodular function on $\mathcal{P}(n)$ using the probability measure $\mu(A) := p^{|A|}(1-p)^{n-|A|}$ for $A\in \mathcal{P}(n)$, where $p\in(0,1)$ is a constant. Our result indicate that it is feasible to derive a correlation inequality between the sizes of sets of permutations categorized based on their fixed containment locations.

    \begin{theorem} \label{lemma1}
    \begin{itemize}
        \item[(a)] Let $\mu$ be any log-supermodular probability measure on $\calp(n)$. Define $$\nu_{\calb}:= \mu\left(  A\in \calp(n) | \forall B\in \calb, B \not\subset A \right), \quad \mbox{for } \calb\subset \calp(n).$$ Then, for each pattern $v\in S_d$, we have 
        $$\prod_{r=1}^{\binom{n}{d}} \prod_{B_1,\cdots, B_r \in [n]_d} \nu_{\{B_1,\cdots,B_r\}}^{f_{\{B_1,\cdots,B_r\}}^v(S_n)} \leq \sum_{A\subset [n], |A|<d} \mu(A).$$ 
        \item[(b)] Choose a chain of subsets $\{\}=A_0\subsetneq A_1 \subsetneq \cdots \subsetneq A_d=[d].$ Let $\mu$ be any probability measure on this chain. For any pattern $v\in S_d,$ and for each $i$, let the pattern $v^i$ to be the reduced form of $v(A_i).$ Then, for any $0 < x_d \leq \cdots \leq x_2 < 1$ we have
        \beq
        \prod_{i=2}^d \left(\sum_{\ell=0}^{i-1} \mu(A_\ell) + \sum_{\ell=i}^d x_\ell\mu(A_\ell)\right)^{f_0^{v^i|v^{i-1}}(S_n)} \leq  \sum_{i=0}^d \mu(A_i) \prod_{\ell=2}^{i} x_\ell^{f_0^{v^{\ell}|v^{\ell-1}}(S_n)},
        \feq
        where $f^{v^i|v^{i-1}}(S_n)$ is the number of permutations in $S_n$ avoiding $v^{i}$ while containing $v^{i-1}.$
    \end{itemize}
    \end{theorem}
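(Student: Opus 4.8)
The plan is to obtain both parts from the FKG inequality, applied in its event/function form: for a log-supermodular probability measure $\mu$ on a finite distributive lattice, decreasing events are positively associated, so that for downward-closed sets $\mathcal{E}_1,\dots,\mathcal{E}_k$ one has $\mu\bigl(\bigcap_t \mathcal{E}_t\bigr)\ge \prod_t \mu(\mathcal{E}_t)$, and equivalently for decreasing nonnegative functions $h_1,\dots,h_k$ one has $\E_\mu\bigl[\prod_t h_t\bigr]\ge \prod_t \E_\mu[h_t]$ (the latter following from the two-function case by induction, since a product of decreasing nonnegative functions is again decreasing).

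For part (a), I would first collapse the double product into a single product over permutations. Since every $\pi\in S_n$ has a uniquely determined occurrence set $\calb_\pi(v)\subset [n]_d$, the sets $F^v_\calb(S_n)$ partition $S_n$ as $\calb$ ranges over subcollections of $[n]_d$; hence the left-hand side equals $\prod_{\pi\in S_n}\nu_{\calb_\pi(v)}$, the omitted term $\calb=\emptyset$ contributing $\nu_\emptyset=1$. For each $B\in[n]_d$ introduce the event $\mathcal{E}_B:=\{A\in\calp(n): B\not\subset A\}$, which is decreasing, and observe $\nu_\calb=\mu\bigl(\bigcap_{B\in\calb}\mathcal{E}_B\bigr)$. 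Writing $D_\pi:=\bigcap_{B\in\calb_\pi(v)}\mathcal{E}_B$ (again decreasing), FKG applied once to the whole family $\{D_\pi\}_{\pi\in S_n}$ gives $\prod_{\pi}\mu(D_\pi)\le \mu\bigl(\bigcap_\pi D_\pi\bigr)$, i.e. the left-hand side is at most $\mu\bigl(\bigcap_\pi D_\pi\bigr)$. The proof then closes with the combinatorial fact $\bigcup_\pi \calb_\pi(v)=[n]_d$: any $d$-subset $B$ is an occurrence location of $v$ in some permutation (place $v$ on the positions of $B$ and fill the rest arbitrarily). Therefore $\bigcap_\pi D_\pi=\bigcap_{B\in[n]_d}\mathcal{E}_B=\{A:|A|<d\}$, whose measure is exactly $\sum_{A\subset[n],\,|A|<d}\mu(A)$.

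For part (b), the chain $A_0\subsetneq\cdots\subsetneq A_d$ is totally ordered, so every probability measure on it is automatically log-supermodular and FKG holds. The device is to introduce, for each $i\in\{2,\dots,d\}$, the step function $\theta_i$ given by $\theta_i(A_j)=x_i$ if $j\ge i$ and $\theta_i(A_j)=1$ otherwise, which is decreasing precisely because $x_i<1$. Writing $g_i:=f_0^{v^i|v^{i-1}}(S_n)$ and $P_i:=\sum_{\ell=0}^{i-1}\mu(A_\ell)+\sum_{\ell=i}^d x_\ell\mu(A_\ell)$, two facts drive the argument. First, the hypothesis $x_d\le\cdots\le x_2$ gives $x_\ell\le x_i$ for $\ell\ge i$, whence $P_i\le \sum_{\ell=0}^{i-1}\mu(A_\ell)+x_i\sum_{\ell=i}^d\mu(A_\ell)=\E_\mu[\theta_i]$. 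Second, a direct computation gives $\prod_{i=2}^d\theta_i(A_j)^{g_i}=\prod_{\ell=2}^{j}x_\ell^{g_\ell}$, so that $\E_\mu\bigl[\prod_{i=2}^d\theta_i^{g_i}\bigr]$ is exactly the right-hand side. Raising $P_i\le \E_\mu[\theta_i]$ to the nonnegative power $g_i$, multiplying, and then applying FKG to the decreasing functions $\theta_i$ (each taken with multiplicity $g_i$) yields $\prod_{i=2}^d P_i^{g_i}\le \prod_{i=2}^d \E_\mu[\theta_i]^{g_i}\le \E_\mu\bigl[\prod_{i=2}^d\theta_i^{g_i}\bigr]$, which is the claim.

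Note that the pattern-theoretic content enters only through the exponents: the inequality in (b) holds for arbitrary nonnegative integers $g_i$, and the identification $g_i=f_0^{v^i|v^{i-1}}(S_n)$ merely supplies their combinatorial meaning. I expect the main obstacle in (a) to be resisting a permutation-by-permutation use of FKG (which associates events in the wrong direction) in favor of a single application to $\{D_\pi\}$, together with the observation $\bigcup_\pi\calb_\pi(v)=[n]_d$; in (b) the crux is identifying the correct decreasing functions $\theta_i$ so that the FKG output matches the right-hand side exactly, which is where the monotonicity assumption $x_d\le\cdots\le x_2$ is indispensable.
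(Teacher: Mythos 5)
Your proposal is correct, and part (a) is the paper's own argument in event language: the paper applies the functional FKG inequality \eqref{fkg_ineq} on $L=\calp(n)$ to the decreasing indicator functions $g_{\pi,v}(A)=[\pi(A)\mbox{ avoids } v]$, $\pi\in S_n$, which are precisely the indicators of your events $D_\pi$; your collapse of the double product into $\prod_{\pi\in S_n}\nu_{\calb_\pi(v)}$ is the paper's partition \eqref{Sn_part}, and your observation that $\bigcup_{\pi}\calb_\pi(v)=[n]_d$ is the paper's computation \eqref{eq1}. For part (b) you use the same framework (FKG on the chain, with one decreasing step function per avoiding permutation, grouped into multiplicities $g_i=f_0^{v^i|v^{i-1}}(S_n)$), but your choice of step functions genuinely differs from the paper's, and the difference is worth recording. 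The paper's function $g_{\pi,v}$ takes the \emph{varying} value $x_j$ at $A_j$ once $j\ge \ell_{\pi,v,T_d}$, so its expectation equals the theorem's left-hand factor $P_i:=\sum_{\ell=0}^{i-1}\mu(A_\ell)+\sum_{\ell=i}^{d}x_\ell\mu(A_\ell)$ exactly (this is \eqref{lhs_in1}); but then the identity \eqref{rhs_in1} is not actually an equality: with that definition, $\prod_{\pi}g_{\pi,v}(A_j)=x_j^{\sum_{\ell=2}^{j}g_\ell}$, and one must invoke $x_d\le\cdots\le x_2$ a second time to bound this above by $\prod_{\ell=2}^{j}x_\ell^{g_\ell}$ --- the inequality goes the right way, so the paper's proof is easily repaired, but the step as written is inexact. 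Your flat steps $\theta_i$ (constant value $x_i$ from level $i$ onward) move the slack to the other side: the product $\prod_{i}\theta_i^{g_i}$ evaluates \emph{exactly} to the theorem's right-hand side, and the monotonicity hypothesis is consumed exactly once, in the comparison $P_i\le\E_\mu[\theta_i]$ before FKG is applied. So your route is a cleaner execution of the same idea that silently corrects the equality-versus-inequality glitch at \eqref{rhs_in1}, and it also makes explicit a feature the paper leaves implicit: the inequality of (b) holds for arbitrary nonnegative integer exponents $g_i$, the pattern-avoidance interpretation entering only through their combinatorial meaning.
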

    \par

     We use the FKG inequality (Theorem~6.2.1, \cite{noga2}) to prove Theorem~\ref{lemma1} in Section~\ref{kn-sec}. It states that increasing events on these lattices are positively correlated, while an increasing and a decreasing event are negatively correlated.

We point out that the scope of this theorem extends beyond pattern occurrences in permutations to a broader class of sequences. Additionally, we note that certain sets $B_1, \cdots, B_r$ may have $f_{\{B_1,\cdots,B_r\}}^v(S_n)=0$, which can be inferred from the structure of the pattern $v$. Notably, for highly ordered patterns such as $v=1\cdots d$, the inequality (a) may not yield much information. However, we conjecture that the inequality holds stronger for highly unordered patterns, although we currently lack proof for this hypothesis.

    As an illustration of Theorem~\ref{lemma1}-(b), consider the pattern $v=143265$ in $S_6$. We choose the subset chain $$\{\}\subset \{2\}\subset\{2,3\}\subset\{2,3,5\}\subset\{2,3, 5, 6\}\subset\{2,3,4,5,6\}\subset\{1,2,3,4,5,6\}.$$
    The corresponding patterns $v^i$ are $v^{0} = \epsilon$, $v^{1}=1$, $v^{2}=21$, $v^{3}=213$, $v^{4}=2143$, $v^{5}=32154$, and $v=v^6= 143265$, where $\epsilon$ denotes the null permutation. See also the discussion immediately following the proof in the Section \ref{kn-sec}.
  
  \section{Proof of Theorem~\ref{thm1}} \label{sec1}

In combinatorics, entropy based arguments have been extensively used to provide simple yet elegant proof of nontrivial results. See \cite{noga2} and \cite{galvin} and the references within for a review of the method and several interesting examples. In this section, we use entropy to prove Theorem~\ref{thm1}.  To that goal, let $X$ be a random variable sampled from the set $\Omega = \{x_1, . . . , x_m\}$ according to the probability measure $\pp(.)$. Define the entropy of the random variable $X$ as
\beq
H(X) := -\sum_{i=1}^m \pp(X=x_i) \log \pp(X=x_i).
\feq
Entropy has many elegant properties, two of which we will use in the rest of this note: boundedness and sub-additivity. The boundedness property for the entropy of a random variable $X$ is the inequality $H(X)\leq \log \# \supp(X)$, where $\supp X$ is the range of the variable $X$ (see \cite{noga2} - Lemma~15.7.1-(i).)  With respect to the latter, a simple generalization of sub-additivity property (Shearer's lemma - see \cite{noga2} - Proposition~15.7.4) is the main ingredient of our proof. Let $X:=(X_1, \cdots, X_n)$ be any random vector. Shearer's lemma states that for a family of subsets of $[n]$ possibly with repeats, namely $\cala$, with each $i \in [n]$ included in at least $t$ members of $\cala$,  
\beq
t H(X) \leq \sum_{A\in \cala} H(X(A)).
\feq

We use these two properties along with the inequality of arithmetic and geometric means (AM-GM) to bound the quantity that we would like to enumerate.   

\begin{proof}[Proof of Theorem~\ref{thm1}-(a)]
Let  $1\leq \ell < d<n$ be fixed. Fix a permutation $\pi\in S_n.$  
Let $v = (v_1,\cdots,v_d)$ be a uniformly random pattern sampled from $C_d(\pi)$. Suppose $\cala=[d]_{\ell}$ and use $H_\pi(v)$ (resp. $H_\pi(v(A)$) to refer to the entropy of $v$ (resp. $v(A)$). Then, Shearer's lemma implies 
\beqn \label{sh_inst1}
\binom{d-1}{\ell-1} H_\pi(v) \leq \sum_{A\in \cala} H_\pi(v(A)).
\feqn
Since $v$ is uniformly chosen from $C_d(\pi)$, then by the definition of entropy 
\beqn \label{thm1_ine8}
H_\pi(v) = \log c_{d}(\pi).
\feqn
In addition, by the boundedness property we write
\beqn \label{thm1_ine9}
H_\pi(v(A)) \leq \log \#\supp{v(A)}.
\feqn
Plugging \eqref{thm1_ine8} and \eqref{thm1_ine9} into \eqref{sh_inst1}, we get 
\beqn
\log c_d(\pi) &\leq& \frac{1}{\binom{d-1}{\ell-1}}\sum_{A\in \cala} \log \#\supp{v(A)} = \log \left(\prod_{A\in [d]_{\ell}} \#\supp{v(A)}\right)^{\frac{1}{\binom{d-1}{\ell-1}}} \notag \\
&=& \log \left( \prod_{A\in [d]_{\ell}}  c_d(\pi; A) \right)^{\frac{1}{\binom{d-1}{\ell-1}}}, \label{eq30}
\feqn
with $c_d(\pi; A)=\# C_d(\pi; A)$. Here $C_d(\pi; A)$ is a subset of $C_d(\pi)$ such that for any two distinct patterns $v$ and $w$ in $S_d$
\beq
v, w\in C_d(\pi;A) \mbox{ if and only if } v(A)\neq w(A).
\feq
Now, for any $\eta\in S_\ell$ we define $C_d^\eta(\pi;A)$ to be the subset of $C_d(\pi; A)$ such that for any two distinct patterns $v$ and $w$; $v,w\in C^{\eta}_d(\pi;A)$ if and only if
\beq
v(A)\neq w(A), \mbox{ and } v(A),w(A) \mbox{ are order-isomorphic to } \eta.
\feq
Next, we write
\beqn \label{eq20}
C_d(\pi; A) = \cup_{\eta\in C_\ell(\pi)} C_d^\eta(\pi; A).
\feqn
For any $\eta\in C_\ell(\pi),$ there are $\binom{d}{\ell}$ distinct choices for the value of $v(A)$ and hence $\#C_d^\eta(\pi; A)\leq \binom{d}{\ell}$ and 
\beq
c_d(\pi; A) \leq \sum_{\eta\in C_\ell(\pi)} \# C_d^\eta (\pi; A) \leq \binom{d}{\ell} c_\ell(\pi).
\feq
Therefore, \eqref{eq30} yields
\beq
c_d(\pi) &\leq& \left( \max_{A\in [d]_\ell} c_d(\pi; A) \right)^{\frac{\binom{d}{\ell}}{\binom{d-1}{\ell-1}}} \\
&\leq& \left( c_\ell(\pi) \binom{d}{\ell} \right)^{\frac{\binom{d}{\ell}}{\binom{d-1}{\ell-1}}} =  \left( c_\ell(\pi) \binom{d}{\ell} \right)^{\frac{d}{\ell}}.
\feq
This completes the proof.
\end{proof}

\begin{proof} [Proof of Theorem~\ref{thm1}-(b)]
Fix a permutation $\pi\in S_n,$ and a pattern $v\in S_d$. Let $\sigma=(\sigma_1, \cdots, \sigma_d)$ be a uniformly random element chosen from $\calb_\pi(v).$ This implies
\beqn \label{eq1_3}
H_\pi(\sigma)=\log  \occ_\pi(v).
\feqn
We then write 
\beq
&& \binom{d-1}{\ell-1} \log  \occ_\pi(v) = \binom{d-1}{\ell-1} H_\pi(\sigma) \qquad \qquad \textrm{By \eqref{eq1_3}}\\
&& \qquad \leq  \sum_{B\in [d]_\ell} H_\pi(\sigma(B))   \qquad \qquad \qquad \textrm{By Shearer's Lemma} \\
&& \qquad \leq  \sum_{B\in [d]_\ell} \log (\# \supp \sigma(B)) \qquad  \quad  \textrm{By boundedness property} \\
&&\qquad  =  \log  \prod_{B\in [d]_\ell} \# \supp \sigma(B)   \\
&& \qquad = \log  \prod_{w\in C_\ell(v)} \prod_{B\in \calb_v(w)} \# \supp \sigma(B) \\
&&\qquad \leq \log  \prod_{w\in C_\ell(v)} \left( \frac{1}{\#\calb_v(w)} \sum_{B\in \calb_v(w)} \# \supp \sigma(B)\right)^{\#\calb_v(w)}  \  \textrm{By AM-GM} \\
&&\qquad \leq \log \prod_{w\in C_\ell(v)} \occ_\pi(w)^{\occ_v(w)}, 
\feq
where for the last line we observe that, for any $w\in C_\ell(v),$ if $B\in \calb_v(w)$, then $\pi(\sigma(B))$ is an occurrence instance of $w$ in $\pi,$ and hence,
\beqn \label{eq1_1}
\sum_{B\in \calb_v(w)} \# \supp \sigma(B) \leq  \occ_v(w)\occ_{\pi}(w).
\feqn
Divide both sides by $\binom{d-1}{\ell-1}$ and simplify. This completes the proof.
\end{proof}

\section{Proof of Theorem~\ref{lemma1}} \label{kn-sec}

The Fortuin–Kasteleyn–Ginibre (FKG) inequality is a fundamental correlation inequality in statistical physics and percolation, expressed in terms of log-supermodular probability measures on distributive lattices. In this section we use it to prove Theorem~\ref{lemma1}. We first review the inequality FKG. \par Suppose $L$ is a distributive lattice and $\mu$ is a log-supermodular probability measure on $L.$ A non-negative function $g:L\to\rr$ is increasing (resp. decreasing) on $L$ if for every $x\leq y$ we have $g(x)\leq g(y)$ (resp. $g(x)\geq g(y)$). The FKG inequality (See \cite{noga2}, Theorem~6.2.1) states that for family $\calg$ of increasing functions, 
\beqn \label{fkg_ineq}
\prod_{g \in \calg} \sum_{x\in L} \mu(x) g(x) \leq \sum_{x\in L} \mu(x) \prod_{g\in \calg}g(x).
\feqn
The same inequality holds for a family of decreasing functions. Now, we give the proof of Theorem~\ref{lemma1} by choosing appropriate $L$ and $\calg$.

\begin{proof} [Proof of Theorem~\ref{lemma1}-(a)] 
Fix a pattern $v\in S_d$. For each permutation $\pi \in S_n,$ we define a function $g_{\pi,v}(.):\calp(n)\to \rr_+$ as 
\beqn \label{g_pi_1}
g_{\pi,v}(A) = 
     \begin{cases}
       1 &\quad\text{if } \pi(A) \mbox{ avoids } v \\
       0 &\quad\text{otherwise.} \notag \\ 
     \end{cases}
\feqn
For each $\pi\in S_n,$ $g_{\pi,v}(A)$ is an decreasing function on the distributive lattice $L=\calp(n).$ This is clear from the fact that for any $A\subset B\subset [n]$, we have
\beq
g_{\pi,v}(A) = [\pi(A) \mbox{ avoids } v] \geq [\pi(B) \mbox{ avoids } v] = g_{\pi,v}(B),
\feq
where $[h]$ is one if $h$ holds true, and is zero otherwise. Let $\calg=\{ g_{\pi,v}\ | \ \pi\in S_n \},$ and apply the FKG inequality \eqref{fkg_ineq}. To that goal, choose any $A\subset [n].$ If $|A|\geq d,$ one can find $\pi \in S_n$ where $\pi(A)$ contains $v$ and hence $\prod_{\pi\in S_n}g_{\pi,v}(A) = 0.$ If $|A|\leq d-1,$ then for any $\pi\in S_n,$ $\pi(A)$ avoids $v$ and hence $\prod_{\pi\in S_n}g_{\pi,v}(A) = 1.$ Therefore,
\beqn \label{eq1}
\prod_{\pi\in S_n}g_{\pi,v}(A) = 
     \begin{cases}
       1 &\quad\text{if}\quad |A|\leq d-1 \\
       0 &\quad\text{otherwise.} 
     \end{cases}
\feqn
Next, recall $S_n$ can be written as
\beqn \label{Sn_part}
S_n = F_0^v(S_n) \cup \left( \cup_{\ell\geq 1} \cup_{\mbox{distinct }B_1,\cdots,B_\ell\in [n]_d}F_{\{B_1,\cdots, B_\ell\}}^v(S_n)\right).
\feqn 
Observe that if $\pi$ avoids $v$, then $g_{\pi,v}(A)=1$ for all $A\in \calp(n)$ and hence
\beqn \label{eq1_thm1}
\sum_{A\in \calp(n)} \mu(A) g_{\pi,v}(A) = \sum_{A\in \calp(n)} \mu(A) = 1.
\feqn
Similarly, given any $\ell$ distinct $B_1,\cdots,B_\ell\in[n]_d$, and for any $\pi\in F_{\{B_1,\cdots, B_\ell\}}^v(S_n)$, we have  
\beqn \label{eq2_thm1}
\sum_{A\in \calp(n)}\mu(A)g_{\pi,v}(A) &=& \mu \left( A\in \calp(n) | \pi(A) \mbox{ avoids } v \right) \notag \\
&=& \mu\left( A\in \calp(n) | B_i \not\subset A, 1\leq i \leq \ell \right).
\feqn 
Finally, we plug \eqref{eq1_thm1} and \eqref{eq2_thm1} into the LHS and \eqref{eq1} into the RHS of \eqref{fkg_ineq}, and use \eqref{Sn_part} to group the terms. This completes the proof.
\end{proof}

The proof of the second part follows a similar argument as the first one. However, there is a difference in the choice of $\calg$ and $L"$.

\begin{proof} [Proof of Theorem~\ref{lemma1}-(b)]
Set $T_d := \{A_i \ | \  0\leq i \leq d \}.$ Observe $T_d$ is a distributive lattice and that any probability measure $\mu$ whose support is $T_d$ is indeed log-submodular. This is obvious given that for any $i<j$, we have $A_i\subsetneq A_j$ and hence
\beq
\mu(A_i)\mu(A_j) =  \mu(A_i\cap A_j )\mu(A_i\cup A_j).
\feq

For a given $\pi\in S_n,$ we define the function $g_{\pi,v}:T_d \to \{0,1\}$ as
\beqn \label{g_pi_2}
g_{\pi, v}(A_i) = 
     \begin{cases}
       1 &\quad i< \ell_{\pi, v, T_d} \\
       x_i &\quad i \geq \ell_{\pi, v, T_d}  \notag
     \end{cases},
\feqn
for any $A_i\in T_d,$ where $\ell_{\pi, v, T_d}$ is the minimal value between $1$ and $d$ where $\pi$ avoids $v^\ell.$ $\ell_{\pi, v, T_d}$ is set to infinity when $\pi$ contains $v.$ We first show for any $\pi$, $g_{\pi,v}$ is decreasing on $T_d$. To that goal, let $i<j$: 
\begin{itemize}
    \item If $\ell_{\pi, v, T_d} \leq i < j$, $g_{\pi,v}(A_i)=x_i  \geq  g_{\pi,v}(A_j)=x_j.$
    \item If $i < j < \ell_{\pi, v, T_d},$ $ g_{\pi,v}(A_i)=g_{\pi,v}(A_j)=1.$
    \item If $i < \ell_{\pi, v, T_d} \leq j,$ $1= g_{\pi,v}(A_i)>g_{\pi,v}(A_j)=x_j.$
\end{itemize}
Hence, we could apply FKG inequality \eqref{fkg_ineq} with $L=T_d$ and $\calg=\{ g_{\pi, v} \ | \ \pi\in F_0^v(S_n) \}$. To obtain the RHS of \eqref{fkg_ineq}, observe that, given any $A_i\in T_d$, we have 
\beqn \label{rhs_in1}
\prod_{\pi\in S_n}g_{\pi,v}(A_i) = \prod_{\ell=2}^{i} x_\ell^{f_0^{v^{\ell}|v^{\ell-1}}(S_n)}.
\feqn
To calculate the LHS of \eqref{fkg_ineq}, pick any $\pi\in F_0^v(S_n)$. In this case,
\beq
\sum_{i=0}^d \mu(A_i)g_{\pi,v}(A_i) = \sum_{i=0}^{\ell_{\pi,v,T_d}-1} \mu(A_i) + \sum_{i=\ell_{\pi,v,T_d}}^d x_i\mu(A_i). 
\feq
Given that
\beq
F_0^v(S_n) = \cup_{\ell=2}^d F_0^{v^\ell | v^{\ell-1}}(S_n),
\feq
the LHS of the FKG inequality becomes
\beqn \label{lhs_in1}
\prod_{\ell=2}^d \left(\sum_{i=0}^{\ell-1} \mu(A_i) + \sum_{i=\ell}^d x_i\mu(A_i)\right)^{f_0^{v^\ell|v^{\ell-1}}(S_n)}. 
\feqn
Inserting \eqref{rhs_in1} and \eqref{lhs_in1} into \eqref{fkg_ineq} completes the proof.
\end{proof}

Let $\mu$ and $\nu_{\calb}$ be as before. We follow a similar argument as part (a). This time however we set $\pi\in S_n$ to be a fixed permutation. we choose $L=\calp(n)$ and  $\calg=\{ g_{\pi,v}\ | \ v\in S_d \}$, and apply the FKG inequality \eqref{fkg_ineq}. To that end, pick any $A\subset [n].$ If $|A|\geq d,$ one can find $v \in S_d$ where $\pi(A)$ contains $v$ and hence $\prod_{\pi\in S_n}g_{\pi,v}(A) = 0.$ If $|A|\leq d-1,$ then for any $v\in S_d,$ $\pi(A)$ avoids $v$ and hence $\prod_{\pi\in S_n}g_{\pi,v}(A) = 1.$ Therefore,
\beqn \label{thm2-ineq1}
\prod_{v\in S_d}g_{\pi,v}(A) = 
     \begin{cases}
       1 &\quad\text{if}\quad |A|\leq d-1 \\
       0 &\quad\text{otherwise.} \\ 
     \end{cases}
\feqn
Next, observe that if $\pi$ avoids $v$, then $g_{\pi,v}(A)=1$ for all $A\in \calp(n)$ and hence
\beqn \label{thm2-ineq2}
\sum_{A\in \calp(n)} \mu(A) g_{\pi,v}(A) = \sum_{A\in \calp(n)} \mu(A) = 1.
\feqn
However, for $v\in C_d(\pi),$ we have 
\beqn \label{thm2-ineq3}
\sum_{A\in \calp(n)}\mu(A)g_{\pi,v}(A) &=& \mu \left( A\in \calp(n) | \pi(A) \mbox{ avoids } v \right) \notag \\
&=& \mu\left( A\in \calp(n) | \forall B\in \calb_\pi(v), \ B \not\subset A \right). 
\feqn
Plugging \eqref{thm2-ineq1}-\eqref{thm2-ineq3} into \eqref{fkg_ineq} completes the following result
\begin{lemma}
For each fixed $\pi\in S_n,$ we have $$\prod_{v\in  C_d(\pi)} \nu_{\calb_\pi(v)} \leq \sum_{A\subset [n], |A|<d} \mu(A).$$
\end{lemma}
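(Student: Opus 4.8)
The plan is to transpose the argument used for Theorem~\ref{lemma1}-(a): there we fixed the pattern $v$ and let the permutation range over $S_n$, whereas here we fix $\pi$ and let the pattern range over $S_d$. The crucial observation is that the same indicator functions $g_{\pi,v}$ introduced in the proof of part (a)---with $g_{\pi,v}(A)=1$ exactly when $\pi(A)$ avoids $v$---are decreasing on the distributive lattice $L=\calp(n)$ for every choice of $\pi$ and $v$, since enlarging $A$ can only create, never destroy, occurrences of $v$ in $\pi(A)$. Hence, as $\mu$ is log-supermodular by assumption, these functions are legitimate inputs to the decreasing version of the FKG inequality \eqref{fkg_ineq} no matter which index we vary.

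First I would fix $\pi\in S_n$ and take the family $\calg=\{g_{\pi,v}\mid v\in S_d\}$, then apply \eqref{fkg_ineq}. For the right-hand side I would compute $\prod_{v\in S_d}g_{\pi,v}(A)$ pointwise in $A$: if $|A|\ge d$ then $\pi(A)$ is order-isomorphic to some pattern in $S_d$, so at least one factor vanishes and the product is $0$; if $|A|\le d-1$ then $\pi(A)$ is too short to contain any length-$d$ pattern, so every factor equals $1$. This collapses the right-hand side to $\sum_{A\subset[n],\,|A|<d}\mu(A)$, exactly as in part (a).

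For the left-hand side I would evaluate each factor $\sum_{A\in\calp(n)}\mu(A)g_{\pi,v}(A)=\mu\bigl(A\in\calp(n)\mid \pi(A)\text{ avoids }v\bigr)$ separately. When $v\notin C_d(\pi)$ the permutation $\pi$ avoids $v$ globally, so $g_{\pi,v}\equiv 1$ and the factor is $\sum_A\mu(A)=1$; when $v\in C_d(\pi)$, avoidance of $v$ in $\pi(A)$ is precisely the event that no $B\in\calb_\pi(v)$ is contained in $A$, so the factor equals $\nu_{\calb_\pi(v)}$. The product over all of $S_d$ therefore reduces to $\prod_{v\in C_d(\pi)}\nu_{\calb_\pi(v)}$, and combining with the right-hand side yields the claim. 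The only genuine subtlety---and the step I would be most careful with---is this bookkeeping reduction from $S_d$ to $C_d(\pi)$: one must verify that the contained patterns are exactly those whose associated event is governed by $\calb_\pi(v)$ while the avoided patterns contribute trivial factors of $1$. Everything else is a direct reuse of the machinery already established for part (a).
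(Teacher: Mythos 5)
Your proposal is correct and is essentially the paper's own proof: fixing $\pi$, taking $\calg=\{g_{\pi,v}\mid v\in S_d\}$ on $L=\calp(n)$, applying the decreasing-function version of the FKG inequality \eqref{fkg_ineq}, and then splitting the factors according to whether $v\in C_d(\pi)$ (contributing $\nu_{\calb_\pi(v)}$) or not (contributing $1$). The bookkeeping step you flag as the main subtlety is handled in the paper in exactly the same way, via the identity that $\pi(A)$ avoids $v$ if and only if no $B\in\calb_\pi(v)$ is contained in $A$.
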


\end{document}